\DeclarePairedDelimiter\floor{\lfloor}{\rfloor}
\newtheorem{observation}{Observation}
\newcommand{\change}[1]{{\color{black} #1}}
\newcommand{\rogers}[1]{{\color{black} #1}}
\begin{document}

\title{Dimension of CPT posets
}


\author{Atrayee Majumder         \and
        Rogers Mathew	\and
        Deepak Rajendraprasad 
}


\institute{Atrayee Majumder \at
              Department of Computer Science and Engineering,\\ Indian Institute of Technology Kharagpur,
Kharagpur, India - 721302.
              \email{atu.tua@gmail.com}           
           \and
          Rogers Mathew \at
              Department of Computer Science and Engineering,\\ Indian Institute of Technology Hyderabad, Telengana, India - 502285.
			\email{rogersmathew@gmail.com}
			\and
			Deepak Rajendraprasad \at
			Department of Computer Science and Engineering,\\ Indian Institute of Technology Palakkad,
Kerala, India - 678557.
			\email{deepakmail@gmail.com}
}

\date{Received: date / Accepted: date}

\maketitle

\begin{abstract}
A collection of linear orders on $X$, say $\mathcal{L}$, is said to \emph{realize} a partially ordered set (or poset) $\mathcal{P} = (X, \preceq)$ if, for any two distinct $x,y \in X$, $x \preceq y$ if and only if $x \prec_L y$, $\forall L \in \mathcal{L}$. We call $\mathcal{L}$ a \emph{realizer} of $\mathcal{P}$. The \emph{dimension} of $\mathcal{P}$, denoted by $dim(\mathcal{P})$, is the minimum cardinality of a realizer of $\mathcal{P}$. 
  
A \emph{containment model} $M_{\mathcal{P}}$ of a poset $\mathcal{P}=(X,\preceq)$ maps every $x \in X$ to a set $M_x$ such that, for every distinct $x,y \in X,\ x \preceq y$ if and only if $M_x \varsubsetneq M_y$. We shall be using the collection $(M_x)_{x \in X}$ to identify the containment model $M_{\mathcal{P}}$. A poset $\mathcal{P}=(X,\preceq)$ is a Containment order of Paths in a Tree (CPT poset), if it admits a containment model $M_{\mathcal{P}}=(P_x)_{x \in X}$ where every $P_x$ is a path of a tree $T$, which is called the host tree of the model.
 
We show that if a poset $\mathcal{P}$ admits a CPT model in a host tree $T$ of maximum degree $\Delta$ and radius $r$, then \rogers{$dim(\mathcal{P}) \leq \lg\lg \Delta + (\frac{1}{2} + o(1))\lg\lg\lg \Delta + 	\lg r + \frac{1}{2} \lg\lg r + 	\frac{1}{2}\lg \pi + 3$. This bound is asymptotically tight up to an additive factor of $\min(\frac{1}{2}\lg\lg\lg \Delta, \frac{1}{2}\lg\lg r)$. Further, let $\mathcal{P}(1,2;n)$ be the poset consisting of all the $1$-element and $2$-element subsets of $[n]$ under `containment' relation and let $dim(1,2;n)$ denote its dimension. The proof of our main theorem gives a simple algorithm to construct a realizer for $\mathcal{P}(1,2;n)$ whose cardinality is only an additive factor of at most $\frac{3}{2}$ away from the optimum.}  
\keywords{
Poset dimension, 
Order dimension, 
3-suitable family of permutations, 
Containment order of paths in a tree.
}
\end{abstract}
\section{Introduction}
\subsection*{Dimension of a poset}
A partially ordered set or \emph{poset} $\mathcal{P}=(X,\preceq)$ is a tuple, where $X$ represents a set, and $\preceq$ is a binary relation on the elements of $X$ that is reflexive, anti-symmetric and transitive.
For any $x,y \in X,\ x$ is said to be \emph{comparable} with $y$ if either $x \preceq y$ or $y \preceq x$. Otherwise, we say $x$ and $y$ are \emph{incomparable}. 
A \emph{linear order} is a partial order in which every two elements are comparable with each other. If a partial order $\mathcal{P}=(X,\preceq)$ and a linear order $L=(X,\prec)$ are both defined on the same set $X$, and if every ordered pair in $\mathcal{P}$ are also present in $L$, then $L$ is called a \emph{linear extension} of $\mathcal{P}$.
A collection of linear orders, say $\mathcal{L}=\{L_1,\ L_2,\ \ldots,\ L_s\}$ with each $L_k$ defined on $X$, is said to \emph{realize} a poset $\mathcal{P}=(X,\preceq)$ if, for any two distinct elements $x_i, x_j \in X$, $x_i \preceq x_j \in \mathcal{P}$ if and only if $x_i \prec_{L_k} x_j,\ \forall L_k \in L$. We call $\mathcal{L}$ a \emph{realizer} for $\mathcal{P}$. The \emph{dimension of a poset} $\mathcal{P}$, denoted by $dim(\mathcal{P})$, is defined as the minimum cardinality of a realizer for $\mathcal{P}$. The concept of poset dimension was introduced by Dushnik and Miller in \cite{dushnik1941partially} and has been extensively studied since then (see \cite{trotter2001combinatorics}). Let $x,y \in X$ such that $x$ and $y$ are incomparable in $\mathcal{P}$. We say the ordered incomparable pair $(x,y)$ is \emph{critical} if 
\begin{enumerate}
\item $\forall u \in X\setminus\{x\}$, $u \preceq x \implies u \preceq y$, and 
\item $\forall v \in X\setminus\{y\}$, $y \preceq v \implies x \preceq v$. 
\end{enumerate}
Critical pairs were introduced by Rabinovitch and Rival \cite{rabinovitch1979rank}. The following theorem is from their paper. 
\begin{theorem}[\cite{rabinovitch1979rank}]\label{thm:rabinovitch}
A family $\mathcal{L}$ of linear extensions of a poset $\mathcal{P} = (X, \preceq)$ is a realizer of $\mathcal{P}$ if and only if, for every critical pair $(x,y) \in X \times X$, there is an $L \in \mathcal{L}$ with $y \prec_L x$. 
\end{theorem}

\subsection*{Containment model for representing a poset}
A \emph{containment model} $M_{\mathcal{P}}$ of a poset $\mathcal{P}=(X,\preceq)$ maps every $x \in X$ to a set $M_x$ such that, for every distinct $x,y \in X,\ x \preceq y$ if and only if $M_x \varsubsetneq M_y$. We shall be using the collection $(M_x)_{x \in X}$ to identify the containment model $M_{\mathcal{P}}$. The reader may note that, for any poset $\mathcal{P}=(X,\preceq)$, $M_x=\{y:\ y \preceq x\}$, $\forall x \in X$, is a valid containment model of $\mathcal{P}$. In  \cite{fishburn1985interval,Fishburn1998,golumbic1989containment,trotter2001combinatorics,urrutia1989partial} researchers have tried imposing geometric restrictions to the sets $M_x$ to obtain geometric containment models. To cite a few: Containment models in which $M_x$ is an interval on the $x$-axis \cite{dushnik1941partially,fishburn1985interval}, or every $M_x$ is a $d$-box in the $d$-Euclidean space \cite{golumbic1984containment,golumbic1989containment,urrutia1989partial}, or every $M_x$ is a $d$-sphere in the $d$-Euclidean space \cite{urrutia1989partial}.

\subsection*{Dimension of posets that admit a containment model}
It was shown by Dushnik and Miller in \cite{dushnik1941partially} that $dim(\mathcal{P}) \leq 2$ if the poset $\mathcal{P}$ admits an interval containment model. Golumbic \cite{golumbic1984containment} and Golumbic and Scheinerman \cite{golumbic1989containment} generalized this further, showing that $\mathcal{P}$ is a containment poset of axis-parallel $d$-dimensional boxes in $d$-dimensional Euclidean space if and only if $dim(\mathcal{P}) \leq 2d$. In \cite{Sidney1988} Sidney et al. stated that all posets of dimension $2$ admit a containment model named circle order where elements of the partial order are mapped to circles in the Euclidean plane.
Santoro and Urrutia showed in \cite{santoro1987angle} that every poset of dimension $3$ can be represented using a containment model where every element of the poset is mapped to an equilateral triangle in the Euclidean plane. They also showed that $dim(\mathcal{P}) \leq n$ when the poset $\mathcal{P}$ admits a containment model where the elements of $\mathcal{P}$ are represented by regular $n$-gons all having the same orientation in the Euclidean plane.
Trotter and Moore in \cite{TROTTER197679} studied the dimension of a poset that admits a containment model where every element of the poset is mapped to a subgraph of a given host graph. They proved the following interesting theorem.
\begin{theorem}
\label{thmintro1}
\cite{TROTTER197679}
If $G$ is a nontrivial connected graph with $n$ non-cut vertices,
 then the dimension of a poset $X(G)$ formed by the induced connected subgraphs of $G$ ordered by inclusion is $n$.
\end{theorem}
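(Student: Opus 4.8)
The plan is to establish the two inequalities $\dim(X(G)) \ge n$ and $\dim(X(G)) \le n$ separately, where $n$ is the number of non-cut vertices of $G$. Throughout I identify an element of $X(G)$ with the vertex set $S\subseteq V(G)$ inducing the connected subgraph, so the order is simply set inclusion: the maximum element is $V(G)$ and the minimal elements are the singletons $\{w\}$, $w\in V(G)$. Let $v_1,\dots,v_n$ be the non-cut vertices; since each $v_i$ is non-cut, $V(G)\setminus\{v_i\}$ induces a connected subgraph and is therefore a genuine element of $X(G)$. For the lower bound I would exhibit a copy of the standard example $S_n$ (the poset whose dimension is known to be $n$ for $n\ge 2$). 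Setting $b_i=\{v_i\}$ and $a_i=V(G)\setminus\{v_i\}$, one has $b_i\subseteq a_j$ iff $i\neq j$, the $b_i$ are pairwise incomparable singletons, and the $a_i$ are pairwise incomparable co-singletons, so the subposet on $\{a_i\}\cup\{b_i\}$ is isomorphic to $S_n$; since dimension is monotone under taking subposets, $\dim(X(G))\ge n$. A connected graph on at least two vertices always has at least two non-cut vertices (the leaves of a spanning tree), so $n\ge 2$, and the degenerate case $|V(G)|=2$ is handled directly.

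The upper bound is the constructive heart. I would first pin down the critical pairs of $X(G)$, using the standard fact that a family of linear extensions realizes a poset precisely when every critical pair is reversed by some member. I claim the critical pairs are exactly the pairs $(\{x\},C)$ with $x\in V(G)$ and $C$ a connected component of $G-x$. Indeed, in any critical pair $(S,T)$ the lower element $S$ must be a singleton: if $|S|\ge 2$ then every $\{s\}$ with $s\in S$ lies strictly below $S$, so the critical-pair condition forces $\{s\}\subseteq T$ for all $s\in S$, i.e. $S\subseteq T$, contradicting incomparability. Writing $S=\{x\}$, the remaining condition says every connected set strictly containing $T$ also contains $x$; one checks this is equivalent to $x$ being the unique neighbour of $T$ outside $T$, which (as $T$ is connected and $x\notin T$) forces $T$ to be a full component of $G-x$.

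For each non-cut vertex $u$ I would build a single linear extension $\mathcal{L}_u$ as follows: place every $S\in X(G)$ with $u\notin S$ below every $S$ with $u\in S$; order the lower block by $f(S)=\min_{s\in S} d_G(u,s)$ in decreasing order (so sets closest to $u$ sit highest), breaking ties by a fixed linear extension of inclusion; and order the upper block by any linear extension of inclusion. Since $A\subseteq B$ implies $f(A)\ge f(B)$, and no set containing $u$ can be contained in a set avoiding $u$, this is a valid linear extension of $X(G)$. Now $\mathcal{L}_u$ reverses the non-cut critical pair $(\{u\},V(G)\setminus\{u\})$ because $\{u\}$ lies in the upper block and $V(G)\setminus\{u\}$ in the lower block. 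More importantly, $\mathcal{L}_u$ reverses every cut critical pair $(\{b\},C)$ with $u\notin C$: every path from $u$ to $C$ must pass through $b$ (as $C$ is a full component of $G-b$ not containing $u$), whence $d_G(u,c)\ge d_G(u,b)+1$ for all $c\in C$, giving $f(C)>f(\{b\})$ and placing $\{b\}$ above $C$.

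It remains to check that the $n$ extensions $\{\mathcal{L}_u\}$ reverse every critical pair. The non-cut pairs are covered by construction. For a cut critical pair $(\{b\},C)$, I would choose a non-cut vertex $u$ lying in some component of $G-b$ other than $C$; then $u\notin C$, so $\mathcal{L}_u$ reverses it. The one graph-theoretic fact this needs is that every component of $G-b$ contains a non-cut vertex of $G$, which follows from the block-cut tree: each component of $G-b$ contains a leaf block, and every vertex of a leaf block except its single articulation vertex is a non-cut vertex of $G$. This supplies, for each cut critical pair, a legitimate index $u$, and completes $\dim(X(G))\le n$. The main obstacle I anticipate is the critical-pair characterization, specifically the clean reduction to ``$x$ is the unique external neighbour of $T$'' and thence to components of $G-x$, since both the upper-bound construction and the conflict-free assignment of cut pairs to extensions hinge on it; the distance-based ordering $f$ is precisely the device that makes each $\mathcal{L}_u$ reverse all the relevant pairs simultaneously, with no case analysis.
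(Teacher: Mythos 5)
Your proof is correct, but note that the paper itself contains no proof of this statement: Theorem~\ref{thmintro1} is imported verbatim from Trotter and Moore \cite{TROTTER197679} as background, so there is no internal argument to compare against and yours must stand on its own --- which it does. The lower bound is the expected embedding of the standard example $S_n$ via $b_i=\{v_i\}$ and $a_i=V(G)\setminus\{v_i\}$ (with the $\abs{V(G)}=2$ collapse, where $a_i$ coincides with $b_{3-i}$, rightly set aside for direct treatment). The substance is the upper bound, and each of its three pillars checks out: (i) the critical pairs of $X(G)$ are exactly the pairs $(\{x\},C)$ with $C$ a component of $G-x$ --- your singleton reduction is airtight, and ``every connected proper superset of $T$ contains $x$'' does force $x$ to be the unique external neighbour of the connected set $T$, hence $T$ a full component of $G-x$; (ii) $\mathcal{L}_u$ is a genuine linear extension because $f(S)=\min_{s\in S}d_G(u,s)$ is antitone under inclusion and no set containing $u$ can sit inside one avoiding $u$; (iii) for a cut pair $(\{b\},C)$ and a non-cut $u$ outside $C$, every $u$--$C$ path passes through $b$, so $f(C)\geq d_G(u,b)+1>f(\{b\})$, which yields the reversal, and one extension per non-cut vertex is exactly why the count comes out to $n$. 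One phrase deserves tightening: a component $K$ of $G-b$ need not literally ``contain a leaf block,'' since the relevant leaf block may contain $b$ itself (take two triangles glued at $b$: each component of $G-b$ is an edge, while the leaf blocks are the triangles); the correct statement, which is all your argument uses, is that the block--cut tree supplies a leaf block whose vertices other than its unique articulation vertex are non-cut in $G$ and lie in $K$. With that wording fixed, the critical-pair formulation plus the distance-level ordering is a clean, self-contained route to a result the paper only cites.
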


In this paper, we focus on \emph{Containment order of Paths in a Tree} (CPT), which was first introduced by Corneil and Golumbic 
\footnote{Corneil, D. and Golumbic, M. C., Unpublished, but cited in \cite{golumbic1989containment}, \cite{golumbic1984containment}},
and studied further by Alc{\'o}n et al. in \cite{ALCON2018139}, and Golumbic and Limouzy\cite{GoLi2012}. Below we define a CPT poset as outlined in \cite{ALCON2018139}.
\begin{definition}
A poset $\mathcal{P}=(X,\preceq)$ is a Containment order of Paths in a Tree (CPT poset), if there exists a tree $T$ such that $\mathcal{P}$ admits a containment model $M_{\mathcal{P}}=(P_x)_{x \in X}$ where every $P_x$ is a path of the tree $T$. $\mathcal{ T}$ will be called the host tree of the model.
\end{definition}
The following theorem stated in \cite{ALCON2018139} follows from \emph{Theorem~\ref{thmintro1}}.
\begin{theorem}
\label{thmintro2}
\cite{ALCON2018139}
If a poset $\mathcal{P}$ admits a CPT model in a host tree $T$ with $k$ leaves then $dim(\mathcal{P}) \leq k$.
\end{theorem}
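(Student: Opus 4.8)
The plan is to realize $\mathcal{P}$ as an induced subposet of the poset $X(\mathcal{T})$ of all connected induced subgraphs of the host tree $\mathcal{T}$ ordered by inclusion, and then read off the bound from \emph{Theorem~\ref{thmintro1}}. The first thing I would establish is the identification of the two parameters in play. In a tree, a vertex is a cut vertex if and only if it has degree at least two: deleting a leaf leaves a (smaller) tree, hence a connected graph, whereas deleting an internal vertex of degree $d$ splits the tree into $d \geq 2$ components. Consequently the non-cut vertices of $\mathcal{T}$ are precisely its leaves, so a host tree with $k$ leaves has exactly $k$ non-cut vertices, and \emph{Theorem~\ref{thmintro1}} yields $dim(X(\mathcal{T})) = k$.

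Next I would observe that every path of a tree is a connected induced subgraph of that tree: since a tree has no cycles, the subgraph induced on the vertex set of a path is exactly that path, and a path is connected. Therefore the family $(P_x)_{x \in X}$ supplied by the CPT model of $\mathcal{P}$ consists of elements of $X(\mathcal{T})$, and by the definition of a containment model we have $x \preceq y$ in $\mathcal{P}$ exactly when $P_x \subsetneq P_y$, which is precisely the inclusion order of $X(\mathcal{T})$ restricted to these paths. Hence the subposet of $X(\mathcal{T})$ induced by the distinct paths occurring among $(P_x)_{x \in X}$ is order-isomorphic to $\mathcal{P}$, and the desired bound follows from the monotonicity of dimension under passing to an induced subposet, namely $dim(\mathcal{P}) \leq dim(X(\mathcal{T})) = k$.

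The one technical point I would need to dispatch is that the assignment $x \mapsto P_x$ need not be injective: two distinct elements may be mapped to the same path, in which case they are necessarily incomparable and have identical comparabilities to every other element of $X$. Such clones can be inserted into any realizer of the reduced poset without enlarging it, by placing each clone immediately beside its twin in every linear order; this shows the dimension is unaffected by collapsing equal paths, so the isomorphism above is harmless. I expect this to be the only place requiring care. The genuinely substantive step is the identification ``non-cut vertices $=$ leaves'' together with the embedding of the path family into $X(\mathcal{T})$; once these are in place, the conclusion is immediate from \emph{Theorem~\ref{thmintro1}} and the standard fact that dimension does not increase on induced subposets.
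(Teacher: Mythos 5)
Your proposal is correct and is essentially the paper's own (implicit) argument: the paper offers no separate proof of this statement, merely noting that it follows from \emph{Theorem~\ref{thmintro1}}, and your derivation --- leaves of a tree are exactly its non-cut vertices, paths are connected induced subgraphs, and dimension is monotone under induced subposets --- is precisely the intended chain of reasoning. One small touch-up: when re-inserting two clones $x,y$ with $P_x=P_y$, placing them adjacently is not quite enough; you must also reverse their relative order in at least one of the (at least two, since $k\geq 2$) linear extensions so that they come out incomparable in the intersection.
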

The following observation follows directly from the definition of critical pairs. 
\begin{observation}\label{obv:criticalpair}
Let $\mathcal{P} = (X, \preceq)$ be a poset that admits a containment model $M_{\mathcal{P}} = (P_x)_{x \in X}$ in a host tree $T$, where each $P_x$ is a path in $T$. Then the critical pairs in $\mathcal{P}$ have the form $(x,y)$ where $P_x$ is a singleton vertex in $T$, say $v$, that is not in $P_y$, and there is no path extending $P_y$ that does not contain $v$ $(=P_x)$. 
\end{observation}


\subsection{Notations and definitions}
\label{A.2}
Unless mentioned explicitly, all logarithms used in the paper are to the base $2$. Given any $n \in \mathbb{N}$, we shall use $[n]$ to denote the set $\{1,2, \ldots, n\}$.

\rogers{
\begin{definition}
\label{def2}
Let $\mathcal{P}$ be a CPT poset that admits a containment model in a host tree $T$. Let $D$ be a planar drawing of $T$. A \emph{linear extension $L$ of $\mathcal{P}$ corresponding to a tree traversal $\lambda$ of $D$} is calculated according to the following rules: Retain all the poset relations. Let 
$last_i(\lambda)$ denote the vertex of a path $P_i$ in $T$ which was listed after every other vertex of $P_i$ was listed in $\lambda$. Let $P_i$ and $P_j$, $i < j$, be two paths representing elements $x_i$ and $x_j$, respectively, of $\mathcal{P}$. Suppose 
$last_i(\lambda) \neq last_j(\lambda)$. Then, $x_i \prec_{L} x_j$ if and only if 
$last_i(\lambda)$ was listed before 
$last_j(\lambda)$ in the tree traversal $\lambda$. Consider the case when  $last_i(\lambda)=last_j(\lambda)$. Then, $x_i \prec_{L} x_j$ if and only if $\abs{P_i} < \abs{P_j}$. 
\end{definition}
It is easy to observe that the linear order $L$ thus constructed is unique and is a linear extension of $\mathcal{P}$. 
} 
%


\section{A $3$-suitable family of permutations and $dim(1,2;n)$}
\begin{definition}
Let $S=\{R_1, R_2, \ldots, R_k\}$, where each $R_i$ is a permutation (or linear order or simple order) of $[n]$. We say $S$ is a $3$-suitable family of permutations of $[n]$ if for every $3$-subset of $[n]$, say $\{a_1, a_2, a_3\}$, and any distinguished element of the set, say $a_3$, there is some permutation $R_i\in S$ such that $a_j \prec_{R_i} a_3$ for every $1 \leq j < 3$, that is $a_3$ succeeds all the other elements of the $3$-subset under $R_i$. Let $\alpha(n)$ denote the cardinality of a smallest $3$-suitable family of permutations of $[n]$. 
\end{definition}

Let $\mathcal{P}(1,2;n)$ denote the poset representing $1$-element and $2$-element subsets of an $n$ element set ordered by the subset containment relation. Let $dim(1,2;n)$ denote the dimension of the poset $\mathcal{P}(1,2;n)$. Given a realizer $\mathcal{R}$ of the poset $\mathcal{P}(1,2;n)$, it is easy to construct a $3$-suitable family of permutations of $[n]$ of the same size from $\mathcal{R}$. In a similar way, given a $3$-suitable family of permutations of $[n]$, one can construct from it a realizer of $\mathcal{P}(1,2;n)$ of the same cardinality.  Thus, $dim(1,2;n) = \alpha(n)$. In this section we discuss some of the bounds on $dim(1,2;n)$. In \cite{HOSTEN1999133},  Ho{\c{s}}ten and Morris proved the following theorem.
\begin{theorem}[Theorem $1.1$ in \cite{HOSTEN1999133}]
\label{thm:Hosten}
Let $n \geq 3$. Then, $dim(1,2;n)$ is the smallest integer $t$ for which there are $n$ antichains in the subset lattice of $[t- 1]$ that do not contain $[t- 1]$ or two sets whose union is $[t - 1]$.
\end{theorem} 
The determination of the number of monotone Boolean functions on $n$ variables is known as Dedekind's problem. This problem is equivalent to calculating the number of antichains of the poset representing all possible subsets of an $n$ element set ordered by subset containment relation. Kleitman and Markovsky in \cite{10.2307/1998052} gave the following result regarding this problem.
\begin{theorem} [\cite{10.2307/1998052}]
\label{thm:Kleitman}
The size of the free distributive lattice, i.e. $\Psi(n)$, on $n$ generators (which is the number of monotone Boolean functions on $n$ variables or the number of antichains in the subset lattice of $[n]$), satisfies the following condition.
$$\Psi(n) \leq 2^{(1+O(\log{n}/n)) {n \choose {\floor{n/2}}}}$$
\end{theorem} 
Combining \emph{Theorem~\ref{thm:Hosten}} and \emph{Theorem~\ref{thm:Kleitman}}, Agnarsson, Felsner, and Trotter has given the following result in \cite{AGNARSSON19995}.
\begin{theorem} [\cite{AGNARSSON19995}] \label{AgnFelsTrot} For any $n \geq 3$, $dim(1,2;n) \leq \lg{\lg{n}} + (1/2 +o(1))\lg{\lg{\lg{n}}}.$ 
\end{theorem} 
Following is a more technical result due to Trotter which combines Theorem  \ref{thm:Hosten} and Theorem \ref{thm:Kleitman}.
\begin{theorem}
\label{trotterthm}
For every $\epsilon > 0$, there is an integer $n_0$ so that if $n > n_0$ and $$s = \lg{\lg{n}} + \frac{1}{2}\cdot\lg{\lg{\lg{n}}} + \frac{1}{2} \cdot \lg{\pi} + \frac{1}{2},\ then$$ $$s - \epsilon < dim(1,2;n) < s + 1 + \epsilon.$$
\end{theorem} 
\section{The dimension of a CPT poset}

\change{

Given a tree $T$, let $\mathcal{P}(T)$ denote the containment poset of all paths
in $T$ and let $dim_p(T)$ denote $dim(\mathcal{P}(T))$.

An antichain $A$ is called {\em intersecting}, if no pair of sets in $A$ is
disjoint.  Let $\beta(t)$ denote the smallest natural number $\beta$ such that
the subset lattice of $[\beta]$ contains an intersecting antichain of size $t$.
It suffices to ensure that \rogers{${\beta \choose \lceil (\beta+1)/2 \rceil} \geq
t$. Hence $\beta(t) \leq \left\lceil\lg t + \frac{1}{2} \lg\lg t + \frac{1}{2}\lg \pi + 1\right\rceil$.}

\begin{theorem} \label{thm:fullposet}
Let $T$ be a rooted tree of height $h$ in which every internal vertex has
exactly $k$ children (a {\em perfect $k$-ary tree} in Computer Science terms).
Then $dim_p(T) \leq \alpha(k) + \beta(h) + 1$.
\end{theorem}

\begin{proof} Let $\alpha = \alpha(k)$ and $\beta = \beta(h)$. Fix a planar
drawing $D$ of $T$ with every child being below its parent. For each internal
node of $T$, identify its $k$ children with $[k]$ in the left to right order
they appear in $D$. Let $\psi = \{\sigma_1, \ldots, \sigma_{\alpha}\}$ be a
$3$-suitable family of linear orders of $[k]$. For each $i \in [\alpha]$, let
$D_i$ be the drawing of $T$ obtained by reordering the $k$ children of every
internal node in $D$ according to $\sigma_i$. Let $\phi = \{A_0, \ldots,
A_{h-1}\}$ be an antichain in the subset lattice of $[\beta]$. For each $i \in
\{0, \ldots, h-1\}$, associate to every vertex $v$ in the $i$-th level of $T$
the set $S(v) = A_i$.  For each $j \in [\beta]$, let $D_{\alpha + j}$ be the
drawing obtained from $D_1$ by reversing the order of the children of every
node $v$ where $j \in S(v)$. For each $i \in [\alpha + \beta]$, let $L_i$ be
the linear extension of $\mathcal{P}(T)$ corresponding (Definition~\ref{def2})
to the preorder traversal of the drawing $D_i$.  Finally let $D_0$ be the
linear extension of $\mathcal{P}(T)$ corresponding to any order of $V(T)$ in
which every parent appears only after all its children (a bottom to top
traversal).  We argue that $\mathcal{L} = \{L_0, \ldots, L_{\alpha + \beta} \}$
is a realizer of $\mathcal{P}(T)$ and thus the theorem.

From Theorem \ref{thm:rabinovitch}, we know that, in order to show that
$\mathcal{L}$ is a realizer for a poset $\mathcal{P}$, it is enough to show
that $\mathcal{L}$ is a collection of linear extensions of $\mathcal{P}$ such
that for every critical pair $(x,y)$ of $\mathcal{P}$, there is an $L \in
\mathcal{L}$ satisfying $y \prec_L x$. Consider a critical pair $(x, y)$ in
$\mathcal{P}(T)$. It follows from Observation~\ref{obv:criticalpair} and the
fact that $\mathcal{P}(T)$ contains every path in $T$, that $x$ is a vertex of
$T$ (call it $p$) and $y$ is a path in $T$ in which one end (call it $q_1$) is
a leaf of $T$ and the other (call it $q_2$) is either a leaf of $T$ or a
degree-$2$ neighbour of $p$. We will call the least common ancestor of $q_1$
and $q_2$ as $q$. For every path in a tree, the last vertex to be traversed in
a preorder traversal will always be an end-vertex of the path.  This is because
every node in a path is an ancestor to at least one end-vertex of the path.
Hence if $p$ succeeds both $q_1$ and $q_2$ in some preorder traversal, then we
have $y \prec_{L} x$, in the corresponding linear extension.

First let us consider the case when $q_2$ is a neighbour of $p$. If $p$ is
the parent of $q_2$, then $p$ succeeds both $q_1$ and $q_2$ in $L_0$. If
$p$ is a child of $q_2$ and then $p$ succeeds both $q_1$ and $q_2$ either in
$L_1$ or in $L_{\alpha + j}$ for every $j \in S(q)$.

Now we consider the case when $q_2$ (along with $q_1$) is also a leaf of $T$.
With relabeling if necessary, we can assume that $q_2$ is to the right of $q_1$
in $D_1$ (i.e., $q_1 \prec q_2$ in $L_1$).  If $p$ is an ancestor of $q$, then
$p$ succeeds both $q_1$ and $q_2$ in $L_0$.  If $p$ is neither an ancestor nor
a descendent of $q$, then $p$ succeeds both $q_1$ and $q_2$ either in $L_1$ or
in $L_{\alpha + j}$ for every $j \in S(r)$, where $r$ is the least common
ancestor of $p$ and $q$. A careful observation will reveal that all the cases
considered so far could have been handled with just three linear extensions -
$L_0$, $L_1$ and a linear extension corresponding to the right to left preorder
traversal of $D_1$. The only remaining case of $p$ being a descendent of $q$ is
the most demanding and we are forced to split it further into the following
three subcases. 

{\em Subcase 1}. Let $q$ be the only ancestor of $p$ in the path from $q_1$ 
to $q_2$. Let $q_1'$, $q_2'$ and $p'$ be, respectively, the ancestors of
$q_1$, $q_2$ and $p$ which are children of $q$. Let $D_i$, $i \in [\alpha]$
be a drawing in which $p'$ succeeds both $q_1'$ and $q_2'$. Such a drawing
will exist since $\psi$ is $3$-suitable for $[k]$. One can verify that $p$
succeeds both $q_1$ and $q_2$ in $L_i$.

{\em Subcase 2.} Let the least common ancestor $p_2$ of $p$ and $q_2$ be a
proper descendent of $q$. If $p$ is to the right of $q_2$ in $D_1$, then $p$
succeeds both $q_1$ and $q_2$ in $L_1$. Otherwise $p$ succeeds both $q_1$ and
$q_2$ in $L_{\alpha + j}$, for every $j \in S(p_2) \setminus S(q)$, which is
non-empty since $\phi$ is an antichain.

{\em Subcase 3.} Let the least common ancestor $p_1$ of $p$ and $q_1$ be a
proper descendent of $q$. If $p$ is to the right of $q_2$ in $D_1$, then $p$
succeeds both $q_1$ and $q_2$ in $L_{\alpha + j}$, for every $j \in S(q)
\setminus S(p_1)$, which is non-empty since $\phi$ is an antichain.  .
Otherwise $p$ succeeds both $q_1$ and $q_2$ in $L_{\alpha + j}$, for every $j
\in S(p_1) \cap S(q)$, which is non-empty since $\phi$ is intersecting.
\qed
\end{proof}

\begin{corollary}\label{cor:expandedbound}
For the tree $T$ in Theorem~\ref{thm:fullposet}, 
\rogers{$dim_p(T) \leq 
	\lg\lg k + (\frac{1}{2} + o(1))\lg\lg\lg k + 
	\lg h + \frac{1}{2} \lg\lg h + 
	\frac{1}{2}\lg \pi + 3$.}
\end{corollary}

\begin{remark}
The tree $T$ in Theorem~\ref{thm:fullposet} has $k^h$ leaves. The subposet of
$\mathcal{P}(T)$ induced on the singleton paths corresponding to every leaf of
$T$ and the maximal paths in $T$ is isomorphic to $\mathcal{P}(1,2;k^h)$. Hence
$dim_p(T) \geq dim(1,2;k^h) = \alpha(k^h)$. \rogers{Using the bound given in Theorem \ref{trotterthm} for $dim(1,2;n)$, we can say that the upper bound in
Corollary~\ref{cor:expandedbound} is tight up to an additive factor of
$\min(\frac{1}{2}\lg\lg\lg k, \frac{1}{2}\lg\lg h) + \frac{1}{2} \lg \pi + 4$. Hence our bound in Corollary \ref{cor:expandedbound} is asymptotically tight when at least one of $k$ or $h$ is a constant.}
\end{remark}

\begin{remark}
For a perfect binary tree $T$ ($k = 2$) of height $h$, we get 
$dim(1,2; 2^h) \leq dim_p(T) \leq \beta(h) + 2$.
\end{remark}
\rogers{
While considering only singleton paths corresponding to every leaf of $T$ and the maximal paths in $T$, the linear order $L_0$ defined in the proof of Theorem \ref{thm:fullposet} is not required as we do not encounter the case where $p$ is an ancestor of $q$ (or $q_2$). This observation helps us in arriving at Remark \ref{rem:3-suitable} below.
Let $\gamma(t)$ denote the smallest natural number $\gamma$ such that the subset lattice of $[\gamma]$ contains an intersecting antichain of size $t$ that satisfies the following property: for every two sets in the antichain, their union is a proper subset of $[\gamma]$.

\begin{remark}
\label{rem:3-suitable}
For any natural number $n \geq 3$, $dim(1,2; n) \leq \gamma(\lg n) \leq \beta(\lg n)+ 1 \leq \\ \left\lceil\lg \lg n + \frac{1}{2} \lg\lg \lg n + \frac{1}{2}\lg \pi + 1\right\rceil + 1$.
\end{remark}
Observe that this bound for $dim(1,2;n)$ is only an additive factor of at most $\frac{3}{2}$ away from the upper bound given by Theorem \ref{trotterthm}. Our proof yields a simple algorithm to construct a realizer for $\mathcal{P}(1,2;n)$ (or a $3$-suitable family of permutations of $[n]$) which is near optimal in size. 
  
}
\begin{corollary}
\label{cor:genposet}
If a poset $\mathcal{P}=(X, \preceq)$ admits a CPT model in a host tree $T$ of maximum degree $\Delta$ and radius $r$, then \rogers{$dim(\mathcal{P}) \leq \alpha(\Delta) + \beta(r) + 1$}.
\end{corollary}
\begin{proof}
$T$ is an induced subgraph of a perfect $\Delta$-ary tree $T$ of
height $r$ and $\mathcal{P}$ is hence an induced subposet of $\mathcal{P}(T)$.
\qed
\end{proof}

}

\begin{acknowledgements}
The authors are grateful to Prof. William T. Trotter for his detailed review comments that helped improve this manuscript significantly. We also thank Prof. Martin Golumbic and Prof. Vincent Limouzy for fruitful discussions on the topic of CPT graphs and posets.
\end{acknowledgements}
\bibliographystyle{acm}
\bibliography{mybibfile}

%

\end{document}